\newcommand{\bbC}{\mathbb{C}}
\newcommand{\bbD}{\mathbb{D}}
\newcommand{\bbN}{\mathbb{N}}
\newcommand{\bbR}{\mathbb{R}}
\newcommand{\bbT}{\mathbb{T}}
\newcommand{\bbZ}{\mathbb{Z}}
\DeclareMathOperator{\id}{id}
\DeclareMathOperator{\one}{\mathbbm{1}}
\newcommand{\argument}{\mathord{\,\cdot\,}}
\DeclareMathOperator{\fix}{fix}
\newcommand{\norm}[1]{\left\lVert #1 \right\rVert}
\newcommand{\modulus}[1]{\left\lvert #1 \right\rvert}
\newcommand{\restricted}[1]{|_{#1}}
\newcommand{\ContB}{\mathrm{C}_{\mathrm{b}}}
\renewcommand{\epsilon}{\varepsilon}
\renewcommand{\phi}{\varphi}
\theoremstyle{definition}
\newtheorem{definition}{Definition}[section]
\newtheorem{remark}[definition]{Remark}
\newtheorem{example}[definition]{Example}
\theoremstyle{plain}
\newtheorem{proposition}[definition]{Proposition}
\newtheorem{lemma}[definition]{Lemma}
\newtheorem{theorem}[definition]{Theorem}
\newtheorem{corollary}[definition]{Corollary}
\numberwithin{equation}{section}
\begin{document}

\title{A Note on the Uniform Ergodicity of Dynamical Systems}
\author{Julian Hölz
\orcidlink{0000-0001-5058-9210}}
\address{University of Wuppertal, Gaußstr. 20, 42119 Wuppertal}
\email{hoelz@uni-wuppertal.de}
%
%
\subjclass[2020]{47A10, 47A35, 47B65, 46A40, 37A05, 37B02}
\keywords{Functional Analysis, Lattice Homomorphism, Uniform Mean Ergodicity, Uniform Almost Periodicity, (Eventual) Periodicity, Dynamical Systems}
\date{\today}
\begin{abstract}
	We study the uniform ergodicity property for non-invertible topological and measure-preserving dynamical systems.  It is shown that for topological dynamical systems uniform ergodicity is equivalent to eventually periodicity and that for measure preserving systems it is equivalent to periodicity. To obtain our results, we prove a result on the long-term behavior of lattice homomorphisms that have $1$ isolated in its spectrum.
\end{abstract}

\maketitle

\section{Introduction}

For dynamical systems $\varphi : X \to X $, where $X$ is either a topological space and $\varphi$ continuous or $X$ is a probability space and $\varphi$ measure-preserving, we study the associated \emph{composition operator} or \emph{Koopman operator}
\begin{align*}
    T_\varphi : E \to E, \quad f \mapsto f \circ \varphi,
\end{align*}
where $E = \ContB(X)$ or $E = L^p(X)$, $1 \leq p < \infty $, respectively. These operators where introduced by Koopman and von Neumann in~\cite{KoopmanVonNeumann-DynamicalSystemsContSpectra-1932}. This class of operators has proven useful in the study of ergodic properties of dynamical systems (see~\cite{Eisner-OperatorTheoreticAspects-2015}). More recently, composition operator have gained popularity as a tool in the numerical analysis of dynamical systems (see~\cite{MauroyMezic-TheKoopmanOperator-2020}).

The composition operator $T_\varphi$ commutes with the modulus on $E$ in the sense that $\modulus{T_\varphi f} = T_\varphi \modulus{f}$ holds for all $f \in E$. Therefore, composition operators belong to a class of operators called \emph{Banach lattice homomorphisms}. In the following, we obtain a description of the following property for the class of Banach lattice homomorphism with spectral radius $r(T) = 1$: A linear operator $T : E \to E$ is called \emph{uniformly ergodic} if the sequence of Cesàro averages
\begin{align*}
    A_n[T] := \frac{1}{n} \sum_{k = 0}^{n - 1} T^k
\end{align*}
for $n \in \bbN$ converges in operator norm. Then the sequence of Cesàro averages converges to a projection $P:E \to E$ with $PE = \fix(T)$ (see, e.g.,~\cite[Theorem~8.5 on p.~138]{Eisner-OperatorTheoreticAspects-2015}).

Let us review some known results and how this paper improves upon them:
\begin{enumerate}[label =(\roman*), labelindent = 0pt, wide]
    \item
          In~\cite[Proposition~W.12]{DerndingerNagelPalm-ErgodicTheoryPerspective-1987} it is shown that an isometric lattice homomorphism $T$ is uniformly ergodic if and only if it is periodic. We show that a non-isometric uniformly ergodic lattice homomorphism can be split into a periodic part and one that decays exponentially to $0$ (see Theorem~\ref{theorem:uniform-almost-periodicity-isolated-spectral-value} and Corollary~\ref{corollary:almost-uniform-periodicity}).

    \item
          It is well-known that a linear operator $T$ on a Banach space with $1 \in \sigma(T)$ and $\lim_{n \to \infty} \frac{1}{n} \norm{T^n} = 0$ the uniform ergodicity is equivalent to $1$ being a pole of first order of the resolvent of $T$ (see~\cite[Theorem~3.16]{Dunford-SpectralTheoryIConvToProj-1943}). For lattice homomorphisms we prove that this is also equivalent to the formally weaker condition that $1$ is an isolated point in the spectrum of $T$ (see Corollary~\ref{corollary:almost-uniform-periodicity}). Moreover, we show that for lattice homomorphisms the constraint $\lim_{n \to \infty} \frac{1}{n} \norm{T^n} = 0$ can be replaced by the formally weaker assumption $r(T) = 1$.

    \item
          In~\cite[Corollary~2]{IonescuTulcea-RandomSeriesSpectraOfMeasurePreservingTrafos-1963} and \cite[Corollaire~1]{Ciprian-SurLesMesuresSpectralTheorieErgodique-1964} it is shown that the spectrum of the composition operator on $L^2$ of an invertible and aperiodic measure-preserving system coincides with the complex unit circle. Hence, $1$ is not isolated in the spectrum, and thus, the system can not be uniformly ergodic. This observation was later strengthened to almost everywhere pointwise convergence in~\cite{Krengel-SpeedConvergenceErgodicTheorem-1978} for transformations on the unit interval $(0,1)$ endowed with the Lebesgue measure and composition operators on $L^p$ and in~\cite[Propositon~3.2.3]{Petersen-ErgodicTheory-1983} to ergodic systems on non-atomic probability spaces. We show that every measure-preserving dynamical system for which the Cesàro averages have a uniform convergence rate must be periodic (see Theorem~\ref{theorem:Lp-periodic}), thereby complementing the recent results in~\cite[Appendix]{CohenLin-UniformErgodicity-2023}.

    \item
          In~\cite[Satz~5]{Scheffold-ErzeugteAlgebraMarkovVerbandsoperatoren-1971} it is proved that topological dynamical systems on compact Hausdorff spaces that are uniformly ergodic have uniformly bounded orbit lengths. In Theorem~\ref{theorem:eventual-periodicity}, we generalize this to non-compact spaces and show that, in this case, the dynamical system is eventually periodic.
\end{enumerate}

Throughout we let $E$ be a complex Banach lattice and $T: E \to E$ be a lattice homomorphism. We denote by $\bbT$ the unit sphere in $\bbC$ and by $\bbD$ the open unit disk in $\bbC$. For a basic introduction to Banach lattice theory, we refer to~\cite[Chapter~9]{AliprantisBorder-InfiniteDimensionalAnalysis-2006}, \cite{MeyerNieberg-BanachLattices-1991} or~\cite{Schaefer-BanachLattices-1974}. By convention a resolvent of a bounded linear operator $T: X \to X$ on some Banach space $X$ is defined to be $R(\lambda, T) \coloneq {(\lambda - T)}^{-1}$ wherever it exists.

\section{Uniform Almost Periodicity and Uniform Ergodicity}
\label{section:uniform-ergodicity}

It follows from~\cite[Theorem~3.16]{Dunford-SpectralTheoryIConvToProj-1943} that if $T$ is a uniformly ergodic operator on a complex Banach space $E$, then $r(T) \leq 1$ and $1$ is either in the resolvent set of $T$ or isolated in the spectrum of $T$.

Let $E$ now be a complex Banach lattice and $T : E \to E$ be a lattice homomorphism that has spectral radius $r(T)  = 1$. We investigate the long-term behavior of the powers ${(T^n)}_{n \in \bbN}$ in case $1$ is an isolated value of $\sigma(T)$. We recall that an eigenvalue $\lambda \in \bbC$ of a linear operator $T$ is called \emph{semi-simple} if $\ker(\lambda - T) = \ker \left({(\lambda - T)}^n \right)$ for all $n \in \bbN$.

\begin{theorem}\label{theorem:uniform-almost-periodicity-isolated-spectral-value}
	Let $E$ be a complex Banach lattice and $T: E \to E$ be linear and a lattice homomorphism that has spectral radius $r(T) = 1$. If $1$ is an isolated value in the spectrum $\sigma(T)$, then $T$ is power-bounded and uniformly ergodic. Moreover, there exist a closed lattice ideal $I_{\mathrm{stab}}$ and a closed sublattice $E_{\mathrm{per}}$ of $E$ such that $E$ decomposes as
	\begin{align*}
		I_{\mathrm{stab}} \oplus E_{\mathrm{per}} = E
	\end{align*}
	and
	\begin{enumerate}[label = \upshape (\roman*)]
		\item $I_{\mathrm{stab}}$, $E_{\mathrm{per}}$ are $T$-invariant,
		\item $T\restricted{E_{\mathrm{per}}}$ is periodic and
		\item $\lim_{n \to \infty} \norm{{(T\restricted{I_{\mathrm{stab}}})}^n} = 0$.
	\end{enumerate}
	Moreover, the peripheral spectrum $\sigma(T) \cap \mathbb{T}$ consists of a finite union of roots of unity, every $\lambda \in \sigma(T) \cap \mathbb{T}$ is a semi-simple eigenvalue and the equality
	\begin{align} \label{eq:periodic-decomposition}
		E_{\mathrm{per}} = \bigoplus_{\lambda \in \sigma(T) \cap \mathbb{T}} \ker(\lambda - T)
	\end{align}
	holds, i.e., $E_{\mathrm{per}}$ is the sum of eigenspaces of $T$ corresponding to unimodular eigenvalues of $T$.
\end{theorem}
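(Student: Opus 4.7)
The plan is to realize $E_{\mathrm{per}}$ and $I_{\mathrm{stab}}$ as the image and kernel of the Riesz spectral projection $P$ associated with the peripheral spectrum $\sigma(T) \cap \mathbb{T}$, and to combine the lattice homomorphism identity $T|f|=|Tf|$ with the periodicity $T^M|_{E_{\mathrm{per}}}=I$ (for a suitable $M$) to verify the lattice-theoretic properties.

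First I would establish that $\sigma(T) \cap \mathbb{T}$ is a finite set of roots of unity, each isolated in $\sigma(T)$. By the classical spectral results for lattice homomorphisms (Schaefer), the spectrum $\sigma(T)$ is invariant under multiplication by every $\lambda \in \sigma(T) \cap \mathbb{T}$; consequently, isolation of $1 \in \sigma(T)$ forces every $\lambda \in \sigma(T) \cap \mathbb{T}$ to be isolated in $\sigma(T)$, and $\sigma(T) \cap \mathbb{T}$ is finite by compactness. Moreover, the cyclicity of the peripheral spectrum, i.e.\ $\lambda \in \sigma(T) \cap \mathbb{T} \Rightarrow \lambda^n \in \sigma(T)$ for every $n \in \mathbb{Z}$, together with the isolation of $1$ (a non-root-of-unity $\lambda$ would have dense orbit $\{\lambda^n\}$ in $\mathbb{T}$), shows each such $\lambda$ is a root of unity, so $\sigma(T) \cap \mathbb{T} \subseteq \{\zeta : \zeta^M = 1\}$ for some $M \in \mathbb{N}$. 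For each such $\lambda$ the Riesz spectral projection $P_\lambda$ exists; setting $P := \sum_\lambda P_\lambda$, $E_{\mathrm{per}} := \mathrm{range}(P)$, $I_{\mathrm{stab}} := \ker P$, standard spectral theory yields the topological direct sum $E = I_{\mathrm{stab}} \oplus E_{\mathrm{per}}$ with both summands closed and $T$-invariant, with $\sigma(T|_{E_{\mathrm{per}}}) = \sigma(T) \cap \mathbb{T}$ and $\sigma(T|_{I_{\mathrm{stab}}}) = \sigma(T) \setminus \mathbb{T}$. The latter is compact and disjoint from the isolated peripheral spectrum, so $r(T|_{I_{\mathrm{stab}}}) < 1$ and Gelfand's formula immediately yields (iii).

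The central technical point, and in my view the main obstacle, is showing that each $\lambda \in \sigma(T) \cap \mathbb{T}$ is a simple pole of the resolvent (equivalently, a semi-simple eigenvalue); without this, $\mathrm{range}(P)$ strictly contains $\bigoplus_\lambda \ker(\lambda - T)$ and \eqref{eq:periodic-decomposition} fails. The argument proceeds by contradiction: after reducing to the case $\lambda = 1$ by passing to $T^M$ (for which spectral mapping preserves pole orders at isolated points, since $z \mapsto z^M$ has non-vanishing derivative at any non-zero $M$-th root of unity), suppose $1$ were a pole of order $k \geq 2$ of $R(\cdot, T^M)$. Then one can find $z, y \in E$ with $T^M y = y$, $T^M z = z + y$, and $y \neq 0$; the lattice homomorphism identity yields $(T^M)^n |z| = |z + ny|$ and hence $\|(T^M)^n|z|\| \sim n\|y\|$, which via the positive Neumann expansion $R(\mu, T^M)|z| = \sum_{n\geq 0} \mu^{-n-1} (T^M)^n|z|$ forces a lower bound $\|R(\mu, T^M)|z|\| \geq c(\mu-1)^{-2}$ as $\mu \to 1^+$. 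Combining this with the Laurent expansion of $R(\cdot, T^M)$ around $1$ and the positivity structure produces a contradiction; alternatively, one may invoke the known fact that isolated peripheral spectral values of a lattice homomorphism are automatically simple poles.

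Granted simplicity, $T^M|_{E_{\mathrm{per}}} = I$ (since $T|_{E_{\mathrm{per}}}$ is semi-simple with spectrum among the $M$-th roots of unity), which proves (ii); the same semi-simplicity also yields \eqref{eq:periodic-decomposition}. The subspace $E_{\mathrm{per}}$ is a sublattice: for $f \in E_{\mathrm{per}}$, the lattice homomorphism property gives $T^M|f| = |T^M f| = |f|$, so $|f| \in \ker(I - T^M)$; since $1 \notin \sigma(T^M|_{I_{\mathrm{stab}}}) = \sigma(T|_{I_{\mathrm{stab}}})^M \subseteq \{|w| < 1\}$, the operator $I - T^M$ is invertible on $I_{\mathrm{stab}}$, whence $\ker(I - T^M) \subseteq E_{\mathrm{per}}$ and thus $|f| \in E_{\mathrm{per}}$. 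Finally, for the ideal property of $I_{\mathrm{stab}}$ I would first prove the characterization $I_{\mathrm{stab}} = \{h \in E : \|T^n h\| \to 0\}$: the inclusion $\subseteq$ is (iii), while conversely if $h \notin I_{\mathrm{stab}}$ then $Ph \in E_{\mathrm{per}} \setminus \{0\}$ and $T^{nM} Ph = Ph$ stays bounded away from $0$. The ideal property is then immediate: if $|f| \leq |g|$ with $g \in I_{\mathrm{stab}}$, then $|T^n f| = T^n|f| \leq T^n|g| = |T^n g|$ gives $\|T^n f\| \to 0$, so $f \in I_{\mathrm{stab}}$.
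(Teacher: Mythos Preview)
Your overall strategy is correct and closely parallels the paper's, but there is a genuine gap in your direct semi-simplicity argument. The lower bound $\lVert R(\mu,T^M)\lvert z\rvert\rVert\ge c(\mu-1)^{-2}$ that you extract from the Jordan chain is entirely consistent with $1$ being a pole of order $\ge 2$ of the resolvent---that is exactly what you assumed---so no contradiction arises from ``the Laurent expansion and the positivity structure'' alone. Your proof therefore rests on the fallback citation (isolated peripheral spectral values of a lattice homomorphism are simple poles), and that is precisely the Gelfand-type ingredient the paper uses too, namely Arendt's theorem that a lattice homomorphism $S$ with $\sigma(S)=\{1\}$ must be the identity. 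You should be explicit that this citation is doing the work, not the growth estimate.

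The paper organises the argument differently, and the reordering is instructive. Rather than proving semi-simplicity on $E$ first, the paper \emph{first} shows $I_{\mathrm{stab}}$ is a closed ideal via the characterization $I_{\mathrm{stab}}=\{x:\mu^{-n}\lVert T^nx\rVert\to 0\}$ for a fixed $\mu\in(r(T|_{I_{\mathrm{stab}}}),1)$, obtaining the non-obvious inclusion by a Neumann-series/Cauchy-integral computation showing $Px=x$. It then passes to the quotient Banach lattice $E/I_{\mathrm{stab}}$, on which the induced operator $T_/$ is again a lattice homomorphism, conjugate to $T|_{E_{\mathrm{per}}}$, with $\sigma(T_/^N)=\{1\}$ for suitable $N$; Arendt's theorem then gives $T_/^N=\id$ directly, whence $T^N|_{E_{\mathrm{per}}}=\id$. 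The quotient detour manufactures exactly the lattice-homomorphism-with-singleton-spectrum hypothesis needed for the Gelfand-type theorem, bypassing any pole-order analysis on $E$. Note also that your characterization $I_{\mathrm{stab}}=\{h:\lVert T^nh\rVert\to 0\}$ and the ensuing ideal argument rely on already knowing $T^M|_{E_{\mathrm{per}}}=\id$; so your proof must keep the order ``semi-simplicity first, ideal second''. The paper's $\mu^{-n}$-characterization has no such dependency, which is why it can establish the ideal property \emph{before} periodicity and then exploit the resulting quotient lattice.
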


Our primary tool in establishing the proof of Theorem~\ref{theorem:uniform-almost-periodicity-isolated-spectral-value} relies on the observation that the spectrum of a lattice homomorphisms $T$ is cyclic (see~\cite[Theorem~V.4.4 on p.~325]{Schaefer-BanachLattices-1974}), i.e., if $\lambda = \modulus{\lambda} \mathrm{e}^{i\theta} \in \sigma(T)$ for some appropriate phase $\theta \in [0,2\pi)$, then $\modulus{\lambda} \mathrm{e}^{i k \theta} \in \sigma(T)$ for all $k \in \bbZ$. We employ this observation to describe the structure of the peripheral spectrum of $T$.

\begin{lemma}\label{lemma:peripheral-point-spectrum-lattice-homo-isolated-one}
	Let $E$ be a complex Banach lattice and $T: E \to E$ be a Banach lattice homomorphism with $r(T) = 1$ and suppose that $\sigma(T) \cap \bbT$ is a proper subset of $\bbT$. Then the peripheral spectrum $\sigma(T) \cap \bbT$ consists of a finite union of finitely many subgroups of $\bbT$.
	In particular, the peripheral spectrum contains finitely many points.
\end{lemma}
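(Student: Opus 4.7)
The plan is to exploit the cyclicity of the spectrum of lattice homomorphisms together with the hypothesis that the peripheral spectrum is a proper subset of $\bbT$. Let $\lambda = \mathrm{e}^{i\theta} \in \sigma(T) \cap \bbT$. By the cyclicity result cited from Schaefer, the whole orbit $\{\mathrm{e}^{ik\theta} : k \in \bbZ\} \subseteq \sigma(T) \cap \bbT$, i.e., the subgroup $\langle \lambda \rangle$ of $\bbT$ generated by $\lambda$ lies inside $\sigma(T) \cap \bbT$. Since $\sigma(T) \cap \bbT$ is closed in $\bbT$ and does not equal $\bbT$, the closure $\overline{\langle\lambda\rangle}$ is a proper closed subgroup of $\bbT$; but the only proper closed subgroups of $\bbT$ are the finite cyclic groups $\mu_n$ of $n$-th roots of unity. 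Hence every $\lambda$ in the peripheral spectrum must itself be a root of unity, and $\langle \lambda \rangle$ is one of the $\mu_n$.

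Next I would quantify this using an explicit gap in the peripheral spectrum. Since $\sigma(T) \cap \bbT \subsetneq \bbT$ and $\sigma(T)$ is compact, the complement $\bbT \setminus \sigma(T)$ contains an open arc $I$ of some positive length $\ell > 0$. For any root of unity $\lambda \in \sigma(T) \cap \bbT$ of order $n$, the subgroup $\langle\lambda\rangle = \mu_n$ subdivides $\bbT$ into $n$ open arcs each of length $2\pi/n$. Since $I \subseteq \bbT \setminus \sigma(T)$ avoids $\mu_n$ and $I$ is an arc of length $\ell$, it must fit inside one of these gaps, so $\ell \leq 2\pi/n$, which forces $n \leq 2\pi/\ell$. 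Consequently, every $\lambda \in \sigma(T)\cap\bbT$ has order bounded by the fixed integer $N := \lfloor 2\pi/\ell \rfloor$.

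Putting this together, $\sigma(T) \cap \bbT \subseteq \bigcup_{n = 1}^{N} \mu_n$, which is a finite set, so the peripheral spectrum is finite. Writing
\begin{align*}
	\sigma(T) \cap \bbT = \bigcup_{\lambda \in \sigma(T) \cap \bbT} \langle \lambda \rangle,
\end{align*}
each $\langle\lambda\rangle$ is a finite subgroup of $\bbT$, and the overall union is finite because the index set is finite. This yields the desired description as a finite union of finite subgroups of $\bbT$.

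I do not expect a serious obstacle: the only subtle point is ensuring that the arc-gap argument actually bounds the order $n$, which works because of the equidistribution of $\mu_n$ on $\bbT$; the rest is a direct use of the cited cyclicity theorem and standard facts about closed subgroups of $\bbT$. No power-boundedness or further assumption on $T$ is required beyond $r(T) = 1$ and the lattice homomorphism property that enters through cyclicity.
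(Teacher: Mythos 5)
Your proof is correct and follows essentially the same route as the paper: cyclicity of the spectrum of a lattice homomorphism plus the density of $\{\alpha^n : n \in \bbZ\}$ for $\alpha$ not a root of unity (equivalently, the classification of proper closed subgroups of $\bbT$) shows every peripheral spectral value is a root of unity. Your explicit arc-gap argument bounding the orders by $2\pi/\ell$ is a welcome elaboration of the finiteness claim, which the paper's one-line proof leaves implicit.
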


\begin{proof}
    This follows from the cyclicity mentioned before the lemma and the density of $\{\alpha^n : n \in \bbZ \}$ in $\bbT$ for each $\alpha \in \bbT$ that is not a root of unity.
\end{proof}

\begin{proof}[Proof of Theorem~\ref{theorem:uniform-almost-periodicity-isolated-spectral-value}]
    It is discussed in Corollary~\ref{corollary:almost-uniform-periodicity} that the decomposition of the space implies the uniform ergodicity of $T$. The power-boundedness of $T$ also is a direct consequence of the decomposition. So let us first show the decomposition.\medskip
    
	It follows from the cyclicity of the spectrum, discussed before Lemma~\ref{lemma:peripheral-point-spectrum-lattice-homo-isolated-one}, and from the lemma itself that every point in the peripheral spectrum $\sigma(T) \cap \bbT$ must be isolated in $\sigma(T)$.

	It follows that there exists a constant $\rho \in (0, 1)$ such that every $\lambda \in \sigma(T) \cap \bbD$ satisfies $\modulus{\lambda} < \rho$. In other words, the spectral points outside of the peripheral spectrum can be bounded ``uniformly'' away from $\bbT$. In particular, we have $\sigma(T) \cap \rho \cdot \bbT = \emptyset$, and hence, we may define $P \in \mathcal{L}(X)$ to be the spectral projection corresponding to the closed subset $\{ \lambda \in \sigma(T) : \lvert \lambda \rvert < \rho \}$ of the spectrum. It is defined by
	\begin{align*}
		P \coloneq \frac{1}{2 \pi i} \oint_{\lvert \lambda \rvert = \rho} R(\lambda, T) \, \mathrm{d}\lambda,
	\end{align*}
	where $R(\argument, T) = {(\argument - T)}^{-1}$ denotes the resolvent of $T$.
	The projection $P$ decomposes the space $E$ into two closed subspaces as
	\begin{align}\label{eq:decomposition-of-space}
		E = PE \oplus (I-P)E \eqcolon I_{\mathrm{stab}} \oplus E_{\mathrm{per}}.
	\end{align}
	Moreover, the image $I_{\mathrm{stab}} = PE$ is a closed and $T$-invariant linear subspace of $E$ and with $r(T\restricted{I_\mathrm{stab}}) < \rho$ and $\sigma(T\restricted{E_{\mathrm{per}}}) = \sigma(T) \cap \bbT$ (see~\cite[Theorem~VII.3.20]{DunfordSchwartz-LinearOperatorsIGeneralTheory-1958}).\medskip

	Fix $\mu \in (r(T\restricted{I_\mathrm{stab}}), \rho)$. To show that $I_{\mathrm{stab}}$ is an ideal we prove the equality
	\begin{align*}
		I_{\mathrm{stab}} = \{x \in E: \lim_{n \to \infty} \mu^{-n} \lVert T^n x \rVert = 0 \} \eqcolon I.
	\end{align*}

	``$I_{\mathrm{stab}} \subseteq I$'': The inclusion holds, since we have that
	\begin{align*}
		\lim_{n \to \infty} \mu^{-n} \lVert {\left( T\restricted{I_\mathrm{stab}} \right)}^n\rVert = 0
	\end{align*}
    by the choice of $\mu$.

    \smallskip

	``$I \subseteq I_{\mathrm{stab}}$'': Let $x \in I$. Then there exists $C > 0$ such that $\norm{T^n x} \leq C \mu^n$ for all $n \in \bbN$. Hence, the Neumann series of the resolvent $R(\argument, T)x$ converges absolutely for all $\lambda \in \bbC$ with $\modulus{\lambda} > \mu$, i.e., we have
    \begin{align*}
        R(\lambda, T) x = \sum_{n = 0}^\infty \lambda^{-(n + 1)} T^n x
    \end{align*}
    for all $\modulus{\lambda} > \mu$.
    Since $\mu < \rho < 1$, we obtain from Cauchy's integral formula for derivatives that
    \begin{align*}
        Px &= \frac{1}{2 \pi i} \oint_{\modulus{\lambda} = \rho} R(\lambda, T)x \, \mathrm{d}\lambda \\
        &= \sum_{n = 0}^\infty \frac{T^n x}{n!} \left( \frac{n!}{2 \pi i} \oint_{\modulus{\lambda} = \rho} \frac{1}{\lambda^{n + 1}} \, \mathrm{d} \lambda \right) = x.
    \end{align*}
    Hence, $x = Px \in I_{\mathrm{stab}}$ follows.\medskip

    Observe that $I$ is a (lattice) ideal, since $T$ is a lattice homomorphism. As, additionally, $I = I_{\mathrm{stab}}$ is closed, it follows that $E/I_{\mathrm{stab}}$ is a Banach lattice (see~\cite[Propositon~II.11.4~on~p.~137]{Schaefer-BanachLattices-1974}). Since $I_{\mathrm{stab}}$ is $T$-invariant we can define the induced operator of $T$ on $E/I_{\mathrm{stab}}$ by
	\begin{align*}
		T_/ : E/I_{\mathrm{stab}} \to E/I_{\mathrm{stab}}, \qquad f + I_{\mathrm{stab}} \mapsto Tf + I_{\mathrm{stab}}.
	\end{align*}
	Notice that $T_/$ is a lattice homomorphism on $E/I_{\mathrm{stab}}$. From~\eqref{eq:decomposition-of-space} we obtain that the mapping
	\begin{align*}
		S : E/I_{\mathrm{stab}} \to E_{\mathrm{per}}, \qquad f + I_{\mathrm{stab}} \mapsto (I - P) f
	\end{align*}
	is an isomorphism of Banach spaces\footnote{At this stage of the proof we do not know whether $E_{\mathrm{per}}$ is a Banach lattice, so this isomorphism can only be one of Banach spaces.}. Moreover, it follows that $T_/ = S^{-1}T S$, so $T_/$ is conjugated to $T\restricted{E_{\mathrm{per}}}$. Thus, $\sigma(T_/) = \sigma(T\restricted{E_{\mathrm{per}}}) = \sigma(T) \cap \mathbb{T}$. Since, by Lemma~\ref{lemma:peripheral-point-spectrum-lattice-homo-isolated-one}, the peripheral spectrum of $T$ consists of a finite union of roots of unity, there exists a $N \in \bbN$ such that $\sigma\left({(T_/)}^N \right) = \{1\}$. Employing~\cite[Corollary~2.2]{SchaeferWolffArendt-OnLatticeIsoWithPositiveRealSpectrum-1978} or~\cite{Zhang-TwoProofsTheoremSchaefferWolffArendt-1992} we obtain that ${(T_/)}^N$ is the identity operator on $E/I_{\mathrm{stab}}$. From the conjugacy it follows that $T^N$ acts as the identity on $E_{\mathrm{per}}$. Thus $E_{\mathrm{per}}$ is contained in the fixed space $\mathrm{fix}(T^N)$. Since the powers of $T$ converge strongly to $0$ on $I_{\mathrm{stab}}$, it follows from the decomposition~\eqref{eq:decomposition-of-space} that we even have the equality $E_{\mathrm{per}} = \mathrm{fix}(T^N)$. As fixed spaces of lattice homomorphisms are sublattices, it follows that $E_{\mathrm{per}}$ is a sublattice of $E$. Moreover, since ${(T\restricted{E_{\mathrm{per}}})}^N$ is the identity on $E_{\mathrm{per}}$, the operator $T\restricted{E_{\mathrm{per}}}$ is periodic. \medskip

    The decomposition~\eqref{eq:periodic-decomposition} now follows from a standard result from linear algebra, see, e.g.,~\cite[Lemma~2.2.3]{Glueck-InvariantSetsLong-2017} applied to the polynomial $p(X) = X^N - 1$.
\end{proof}

\begin{remark}
\label{rem:information-proof}
	\begin{enumerate}[label = (\roman*)]
		\item \label{it:rem:information-proof:1-is-spectral-value}
		      Note that the assumption $r(T) = 1$ automatically implies that $1 \in \sigma(T)$. This follows easily from the cyclicity of the spectrum discussed before Lemma~\ref{lemma:peripheral-point-spectrum-lattice-homo-isolated-one} and is even true for the more general class of positive operators on Banach lattices (see~\cite[Proposition~4.1.1~i)]{MeyerNieberg-BanachLattices-1991}).

		\item
		      Corollary~2.2 cited from~\cite{SchaeferWolffArendt-OnLatticeIsoWithPositiveRealSpectrum-1978} in the above proof holds for the larger class of Lamperti operators (see~\cite[Corollary~3.6.2]{Arendt-SpectralPropertiesLamperti-1983}). It is part of a class of theorems called Gelfand's $T = \id$ theorems (see, e.g.,~\cite[Theorem B.17]{EngelNagel-Semigroups-2000} or~\cite{Gelfand-TheorieCharaktereAbelscherTopologischenGruppen-1941} for the original result). In~\cite[Lemma~2.1]{LinShoikhetSuciu-RemarksUniformErgodic-2015} a version for a priori uniform ergodic operators on Banach spaces is given.

		\item
		      The fact that $I_{\mathrm{stab}}$ is a closed ideal also follows directly from~\cite[Theorem~4.1]{Arendt-SpectralPropertiesLamperti-1983} and is proved for the more general class of Lamperti operators.
	\end{enumerate}
\end{remark}

Notice that, although $I_{\mathrm{stab}}$ is a closed ideal, in general, it is not a band, as the following example shows.

\begin{example}
	There exists a Banach lattice $E$ and a Banach lattice homomorphism $T: E \to E$ with $r(T) = 1$ that has $1$ as an isolated point in its spectrum but the closed ideal $I_{\mathrm{stab}}$ is not a band.

	Consider the Banach lattice $E \coloneq c(\bbN)$ of all convergent sequences in $\bbC$ endowed with the supremum norm. Define the operator
	\begin{align*}
		T: E \to E, \qquad a = {(a_n)}_{n \in \bbN} \mapsto \lim_{n \to \infty} a_n \cdot \one,
	\end{align*}
	where $\one = {( 1 )}_{n \in \bbN}$. Since $T$ is a projection it follows that $\sigma(T) = {0, 1}$ and that the conditions of Theorem~\ref{theorem:uniform-almost-periodicity-isolated-spectral-value} are satisfied. Since $E_{\mathrm{per}} = \fix(T) = \mathrm{span} \{\one \}$, we have $I_{\mathrm{stab}} = c_0(\bbN)$, which is not a band in $c(\bbN)$.
\end{example}

Let us state four corollaries of Theorem~\ref{theorem:uniform-almost-periodicity-isolated-spectral-value}. In case $T$ is isometric, we arrive at the following conclusion This is~\cite[Proposition~W.12]{DerndingerNagelPalm-ErgodicTheoryPerspective-1987}.

\begin{corollary}\label{corollary:isometry}
    Let $E$ be a complex Banach lattice and $T: E \to E$ be an isometric lattice homomorphism with $r(T) = 1$ that has $1$ as an isolated point in its spectrum. Then $T$ is periodic.
\end{corollary}
\begin{proof}
    It follows from the isometry of $T$ that $I_{\mathrm{stab}} = \{0 \}$.
\end{proof}

The second corollary connects the assumption that $1$ is isolated in $\sigma(T)$ to two a priori distinct notion on the long-term behavior of ${(T^n)}_{n \in \bbN}$. We call a linear operator $T:E \to E$ \emph{uniformly almost periodic} if there exists a periodic operator $S: E \to E$ such that
\begin{align*}
	\lim_{n \to \infty} \lVert T^n - S^n \rVert = 0.
\end{align*}

\begin{corollary}\label{corollary:almost-uniform-periodicity}
	Let $E$ be a complex Banach lattice and $T: E \to E$ be a lattice homomorphism with $r(T) = 1$. Then the following statements are equivalent.
	\begin{enumerate}[label = \upshape (\roman*)]
		\item \label{item:corollary:almost-uniform-periodicity:one-isolated}
		      The point $1$ is isolated in the spectrum of $T$.

		\item \label{item:corollary:almost-uniform-periodicity:uniformly-almost-periodic}
		      The operator $T$ is uniformly almost periodic.

		\item \label{item:corollary:almost-uniform-periodicity:uniformly-mean-ergodic}
		      The operator $T$ is uniformly ergodic.
	\end{enumerate}
\end{corollary}

The uniform ergodicity for the more general classes of positive operators and Markov operators was studied by Michael Lin. We refer to~\cite{Lin-QuasiCompactnessUniformErgodicityPositive-1978} and~\cite{Lin-QuasiCompactnessUniformErgodicityMarkov-1975}, respectively.

\begin{proof}[Proof of Corollary~\ref{corollary:almost-uniform-periodicity}]
	``\ref{item:corollary:almost-uniform-periodicity:one-isolated} $\Rightarrow$ \ref{item:corollary:almost-uniform-periodicity:uniformly-almost-periodic}'':
	This is Theorem~\ref{theorem:uniform-almost-periodicity-isolated-spectral-value}.\smallskip

	``\ref{item:corollary:almost-uniform-periodicity:uniformly-almost-periodic} $\Rightarrow$ \ref{item:corollary:almost-uniform-periodicity:uniformly-mean-ergodic}'':
	Let $N \in \bbN$ be the period of $S$ and define $P \coloneq \lim_{n \to \infty} \frac{1}{n} \sum_{k = 0}^{n - 1} S^k$, which exists since $S$ is periodic. As noted in the introduction, the operator $P$ is then a projection onto $\mathrm{fix}(S)$. By~\ref{item:corollary:almost-uniform-periodicity:uniformly-almost-periodic}, it follows that $\norm{A_n[T] - A_n[S]} \to 0$, so $\norm{A_n[T] - P} \to 0$.\smallskip

	``\ref{item:corollary:almost-uniform-periodicity:uniformly-mean-ergodic} $\Rightarrow$ \ref{item:corollary:almost-uniform-periodicity:one-isolated}'':
	If $T$ is uniformly ergodic, then it follows that $\lim_{n \to \infty} \frac{1}{n} \norm{T^n} = 0$, and thus, by~\cite[Theorem~3.16]{Dunford-SpectralTheoryIConvToProj-1943} and the fact that $1 \in \sigma(T)$ (cf.\ Remark~\ref{rem:information-proof}\ref{it:rem:information-proof:1-is-spectral-value}), it follows that $1$ is a pole of the resolvent $R(\argument, T)$, so it is isolated in $\sigma(T)$.
\end{proof}

When we do not require that $T$ has spectral radius $1$, we obtain the following characterization of uniform ergodicity for lattice homomorphisms.

\begin{corollary}
    Let $E$ be a complex Banach lattice. Then a Banach lattice homomorphism $T:E \to E$ is uniformly ergodic if and only if it is power-bounded and $1$ is either isolated in the spectrum of $T$ or in the resolvent set of $T$.
\end{corollary}
\begin{proof}
    ``$\Rightarrow$'':
    The uniform ergodicity implies that $\lim_{n \to \infty} \frac{1}{n} \norm{T^n} = 0$, which implies $r(T) \leq 1$. In case $r(T) < 1$, then the power-boundedness follows immediately and $1$ is in the resolvent set of $T$. When $r(T) = 1$, then $T$ is power-bounded and has $1$ isolated in the spectrum of $T$ by Corollary~\ref{corollary:almost-uniform-periodicity} and Theorem~\ref{theorem:uniform-almost-periodicity-isolated-spectral-value}.

    ``$\Leftarrow$'':
    The power-boundedness implies that $r(T) \leq 1$. When $r(T) < 1$, then the uniform ergodicity is immediate. If $r(T) = 1$, then $1$ must be in the spectrum of $T$ by the cyclicity of the spectrum mentioned before Lemma~\ref{lemma:peripheral-point-spectrum-lattice-homo-isolated-one}. Thus, by assumption, $1$ is isolated in $\sigma(T)$ and Corollary~\ref{corollary:almost-uniform-periodicity} implies the uniform ergodicity of $T$.
\end{proof}

We can now give an easy spectral characterization of all periodic lattice homomorphisms. Similar results for invertible positive operators can be found in~\cite[Theorem~3.1]{Zhang-AspectsSpectralPositive-1992} or~\cite[Theorem~5.3]{Zhang-SpectralPropertiesPositiveOperators-1993}. For power-bounded operators on reflexive Banach spaces a similar result can be found in~\cite[Corollary~2.7]{Lin-ReflexiveBanachPowerBoundedAlmostPeriodic-2020}.

\begin{corollary}
	Let $E$ be a complex Banach lattice and $T: E \to E$ be a lattice homomorphism. Then $T$ is periodic if and only if its spectrum $\sigma(T)$ is a proper subset of $\bbT$.
\end{corollary}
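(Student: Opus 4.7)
The plan splits cleanly into the two directions of the equivalence.

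For the easy direction "$\Rightarrow$", if $T^N = \id$ then the spectral mapping theorem for polynomials yields $\lambda^N \in \sigma(T^N) = \{1\}$ for every $\lambda \in \sigma(T)$, so $\sigma(T)$ is contained in the set of $N$-th roots of unity, which is a finite and hence proper subset of $\bbT$. No further work is required.

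For the substantive direction "$\Leftarrow$", I would assume $\sigma(T) \subsetneq \bbT$ and reduce to $E \neq \{0\}$ (otherwise periodicity is trivial). Then $\sigma(T)$ is nonempty and lies in $\bbT$, so $r(T) = 1$. Since $\sigma(T) \cap \bbT = \sigma(T)$ is a proper subset of $\bbT$, Lemma~\ref{lemma:peripheral-point-spectrum-lattice-homo-isolated-one} shows that $\sigma(T)$ is a finite union of finite subgroups of $\bbT$, i.e., a finite set of roots of unity; in particular $1 \in \sigma(T)$ by Remark~\ref{rem:information-proof}\ref{it:rem:information-proof:1-is-spectral-value} and $1$ is isolated in $\sigma(T)$. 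Theorem~\ref{theorem:uniform-almost-periodicity-isolated-spectral-value} then applies and produces the decomposition $E = I_{\mathrm{stab}} \oplus E_{\mathrm{per}}$ with $T\restricted{E_{\mathrm{per}}}$ periodic.

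The only remaining task, and the step that carries any content, is to rule out a nontrivial stable summand, i.e., to show $I_{\mathrm{stab}} = \{0\}$. Here I would simply recall the construction from the proof of Theorem~\ref{theorem:uniform-almost-periodicity-isolated-spectral-value}: $I_{\mathrm{stab}}$ is the range of the spectral projection associated to $\{\lambda \in \sigma(T) : \modulus{\lambda} < \rho\}$ for a suitable $\rho \in (0,1)$. Under the hypothesis $\sigma(T) \subseteq \bbT$ this set is empty, so the Cauchy integral defining the projection vanishes and $I_{\mathrm{stab}} = \{0\}$. Consequently $E = E_{\mathrm{per}}$ and $T$ is periodic.

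The main obstacle is entirely conceptual and quite mild once Theorem~\ref{theorem:uniform-almost-periodicity-isolated-spectral-value} is in hand: one must notice that a spectrum confined to $\bbT$ leaves no interior spectral mass to feed the stable summand. The rest is bookkeeping against the theorem and the lemma already established.
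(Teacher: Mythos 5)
Your proposal is correct and follows essentially the same route as the paper: deduce $r(T)=1$, use Lemma~\ref{lemma:peripheral-point-spectrum-lattice-homo-isolated-one} to see that $1$ is isolated in $\sigma(T)$, invoke Theorem~\ref{theorem:uniform-almost-periodicity-isolated-spectral-value}, and kill $I_{\mathrm{stab}}$ by noting that the defining spectral projection vanishes when no spectrum lies inside the unit disk. The forward direction via the spectral mapping theorem is exactly the ``straightforward'' argument the paper has in mind.
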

\begin{proof}
    ``$\Rightarrow$'':
    This is straightforward to see.\smallskip

    ``$\Leftarrow$'':
	If the spectrum of $T$ is a proper subset of $\bbT$, then $r(T) = 1$ follows. Hence, by Lemma~\ref{lemma:peripheral-point-spectrum-lattice-homo-isolated-one} the spectrum is a finite union of finite subgroups of $\bbT$, so $1$ is isolated in the peripheral spectrum $\sigma(T) \cap \bbT$. As by assumption the peripheral spectrum coincides with the spectrum, it follows that $1$ is isolated in $\sigma(T)$.
	Thus, by Theorem~\ref{theorem:uniform-almost-periodicity-isolated-spectral-value} it suffices to prove that $I_{\mathrm{stab}} = \{0\}$.
 
    Recall that in the proof of Theorem~\ref{theorem:uniform-almost-periodicity-isolated-spectral-value}, the ideal $I_{\mathrm{stab}}$ was defined to be the image of the spectral projection $P$ corresponding to the spectral values that are not peripheral. By the assumptions on the spectrum we have $P = 0$ from which the statement follows.
\end{proof}

\section{Application to Composition Operators}
\label{section:composition-operators}

We apply Theorem~\ref{theorem:uniform-almost-periodicity-isolated-spectral-value} to the two types of composition operators that were introduced at the beginning of the paper. We will begin with a characterization of the uniform ergodicity of topological dynamical systems. To this end we introduce AM-spaces with a unit $e$ and study lattice homomorphisms that have $e$ as fixed point. Later in the chapter we focus on composition operators for measure preserving dynamical systems on $L^p$-spaces for $p \in [1, \infty)$.

We call a Banach lattice $E$ an \emph{AM-space}, if it satisfies
\begin{align*}
    \norm{x \vee y} = \norm{x} \vee \norm{y}
\end{align*}
for all $x,y \in E_+$. An element $e \in E_+$ is called unit, if for every $x \in E$ there exists a number $\lambda > 0$ such that $\modulus{x} \leq \lambda e$.
An AM-space $E$ with a unit $e$ can always be equipped with an equivalent norm $\norm{\argument}_e$ satisfying $\norm{e}_e = 1$, called the \emph{gauge norm of $e$}. The gauge norm of $e$ is defined by
\begin{align*}
    \norm{x}_e \coloneq \inf \{ \lambda > 0  : \modulus{x} \leq \lambda e\}
\end{align*}
for all $x \in E$. Therefore, whenever we work with AM-spaces with a unit $e$ we will tacitly assume that $\norm{e} = 1$.

Important examples of AM-spaces are the spaces of bounded and continuous functions on a topological space. For a topological space $X$ we denote by $\ContB(X)$ the space of all bounded complex-valued continuous functions endowed with the supremum norm. It is straightforward to see that $\ContB(X)$ is an AM-space with the constant function $\one$ as a unit.

The next technical lemma brings~\cite[Proposition~2.3(I)]{Kuehner-WhatKoopmanismDoAttractors-2021} to the realm of lattice operators on AM-spaces that leave a unit fixed. It is proved analogously.

\begin{lemma}\label{lemma:kuehner-nilpotency}
    Let $E$ be a complex AM-space with unit $e$. Let $T: E \to E$ be a lattice homomorphism with $T e = e$. Let $I \subseteq E$ be a $T$-invariant and closed lattice ideal such that $\lVert {\left(T\restricted{I}\right)}^n \rVert$ converges to $0$. Then $T\restricted{I}$ is nilpotent.
\end{lemma}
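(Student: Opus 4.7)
The plan is first to reduce to the case $\|T\restricted{I}\| < 1$. Since $\|(T\restricted{I})^n\|$ tends to $0$, we may pick $n_0 \in \bbN$ with $\|(T\restricted{I})^{n_0}\| < 1$. The power $T^{n_0}$ is still a lattice homomorphism, still fixes $e$, and still leaves $I$ invariant, so it suffices to prove the stronger statement that a lattice homomorphism $T$ of a complex AM-space with unit $e$, satisfying $Te = e$ and $\|T\restricted{I}\| < 1$ on a $T$-invariant closed ideal $I$, already vanishes on $I$. Nilpotency of the original $T\restricted{I}$ then follows with nilpotency index at most $n_0$.

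To establish this sharper statement, I would invoke the (complex) Kakutani representation theorem to identify $(E, e)$ lattice-isometrically with $(\Cont(K), \one)$ for some compact Hausdorff space $K$. Under this identification, a unit-preserving lattice homomorphism is a Koopman operator $T = T_\varphi$ for a continuous self-map $\varphi : K \to K$, and every closed lattice ideal of $\Cont(K)$ has the form $I_A = \{ f \in \Cont(K) : f\restricted{A} = 0 \}$ for some closed set $A \subseteq K$. The $T$-invariance of $I$ translates into $\varphi(A) \subseteq A$.

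The crux is then a dichotomy showing $\|T\restricted{I_A}\| \in \{0, 1\}$: if $\varphi(K) \subseteq A$, then $Tf = f \circ \varphi \equiv 0$ for every $f \in I_A$, so the operator norm is $0$. Otherwise there exists a point $z_0 \in \varphi(K) \setminus A$, and by Urysohn's lemma applied to the disjoint closed sets $A$ and $\{z_0\}$ one produces $f \in \Cont(K)$ with $0 \leq f \leq \one$, $f\restricted{A} = 0$ and $f(z_0) = 1$. Then $f \in I_A$ has norm $1$ and $(Tf)(y_0) = 1$ at any preimage $y_0 \in \varphi^{-1}(z_0)$, which forces $\|T\restricted{I}\| = 1$. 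The hypothesis $\|T\restricted{I}\| < 1$ rules out the second case, hence $\varphi(K) \subseteq A$ and $T\restricted{I} = 0$.

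The main obstacle is not the argument itself but the invocation of the structural results behind it — Kakutani's representation together with the identification of unit-preserving lattice homomorphisms on $\Cont(K)$ as Koopman operators; once these are available, the proof reduces to the clean Urysohn argument above. This is essentially the strategy of Kühner's referenced Proposition~2.3(I) transported from $\ContB(X)$ to the abstract AM-space setting, which explains the claim that the proof is ``analogous''.
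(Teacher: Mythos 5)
Your proposal is correct and follows essentially the same route as the paper: Kakutani representation of the AM-space as $\Cont(K)$, realization of the unit-preserving lattice homomorphism as a composition operator $T_\varphi$, identification of the closed ideal with $\{f : f\restricted{A} = 0\}$ for a closed set $A$, and the Urysohn argument. The only difference is organizational — you pass to a single power $T^{n_0}$ with $\lVert (T\restricted{I})^{n_0}\rVert < 1$ and prove a norm dichotomy, whereas the paper argues by contradiction that $\varphi^{n_0}(K) \subseteq A$ for some $n_0$ directly from $\lVert (T\restricted{I})^{n}\rVert \to 0$ — but the substance is the same.
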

\begin{proof}
    By a representation theorem of Kakutani-Bohnenblust-Krein (see~\cite[Theorem~3.6]{AbramovichAliprantis-InvitationOperator-2002}) we may assume that $E = \ContB(K)$ for some compact Hausdorff space $K$ and $T$ maps the constant function $\one$ to itself.
    
	By~\cite[Theorem~III.9.1]{Schaefer-BanachLattices-1974} there exists a continuous function $\varphi: K \to K$ such that $T = T_\varphi$ with
	\begin{align*}
		T_\varphi : \ContB(K) \to \ContB(K), \quad f \mapsto f \circ \varphi.
	\end{align*}
    Moreover, there exists a closed set $M$ such that $I = \{f \in \ContB(K): \forall x \in M : f(x) = 0 \}$ (see~\cite[Example~1~on~p.~157]{Schaefer-BanachLattices-1974}).

    We show that there exists $n_0 \in \bbN$ such that $\varphi^{n_0}(K) \subseteq M$, since then for every $f \in I$ and $x \in K$ it follows that $T^{n_0} f(x) = f(\varphi^{n_0}(x)) = 0$. This implies the nilpotency. Assume to show a contradiction that for every $n \in \bbN$ there exists $x \in K$ such that $\varphi^n(x) \in K \setminus M$. By Urysohn's lemma there exists a real-valued, non-negative $f \in I$ with $f(\varphi^n(x)) = 1$ and $\norm{f} = 1$. Then
    \begin{align*}
        \norm{{(T\restricted{I})}^n} \geq \norm{{(T\restricted{I})}^n f} \geq f(\varphi^n(x)) = 1.
    \end{align*}
    This contradicts the assumptions of the lemma.
\end{proof}

Combining the statement of Lemma~\ref{lemma:kuehner-nilpotency} with that of Theorem~\ref{theorem:uniform-almost-periodicity-isolated-spectral-value} we obtain the following immediate corollary for AM-spaces. We will call a map $\psi$ of some set into itself \emph{eventually periodic}, if there exists $k, p \in \bbN$ such that
\begin{align*}
    \psi^{k + np} = \psi^k
\end{align*}
for every $n \in \bbN$.

\begin{corollary}\label{corollary:eventual-periodicity}
    Let $E$ be a complex AM-space with unit $e$. Let $T : E \to E$ be a Banach lattice homomorphism with $Te = e$. Suppose that $1$ is isolated in $\sigma(T)$. Then $T$ is eventually periodic.
\end{corollary}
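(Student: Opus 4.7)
The plan is to combine Theorem~\ref{theorem:uniform-almost-periodicity-isolated-spectral-value} and Lemma~\ref{lemma:kuehner-nilpotency} directly. Before doing so, I need to verify that the hypotheses of Theorem~\ref{theorem:uniform-almost-periodicity-isolated-spectral-value} are satisfied, which amounts to checking $r(T) = 1$. From $Te = e$ we immediately get $1 \in \sigma(T)$, hence $r(T) \geq 1$. For the reverse inequality, I would work with the gauge norm $\norm{\argument}_e$: if $\modulus{x} \leq \lambda e$ then, since $T$ is a lattice homomorphism fixing $e$, we have $\modulus{Tx} = T\modulus{x} \leq \lambda T e = \lambda e$, so $\norm{Tx}_e \leq \norm{x}_e$. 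Thus $T$ is a contraction in the gauge norm and $r(T) \leq 1$.

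Next I would invoke Theorem~\ref{theorem:uniform-almost-periodicity-isolated-spectral-value} to obtain the splitting $E = I_{\mathrm{stab}} \oplus E_{\mathrm{per}}$ with both summands $T$-invariant, $\lim_{n \to \infty} \norm{{(T\restricted{I_{\mathrm{stab}}})}^n} = 0$, and a period $N \in \bbN$ for which $T^N$ acts as the identity on $E_{\mathrm{per}}$. Since $I_{\mathrm{stab}}$ is a closed $T$-invariant lattice ideal on which the powers of $T$ tend to $0$ in operator norm, Lemma~\ref{lemma:kuehner-nilpotency} applies and yields some $k \in \bbN$ with $T^k\restricted{I_{\mathrm{stab}}} = 0$.

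Finally I would glue the two pieces. For any $x \in E$, decompose $x = x_s + x_p$ with $x_s \in I_{\mathrm{stab}}$ and $x_p \in E_{\mathrm{per}}$. For every $n \in \bbN$,
\begin{align*}
    T^{k + nN}(x_s + x_p) = T^{nN}\bigl(T^k x_s\bigr) + T^k\bigl(T^{nN} x_p\bigr) = 0 + T^k x_p = T^k(x_s + x_p),
\end{align*}
using nilpotency on $I_{\mathrm{stab}}$ and $T^{nN}\restricted{E_{\mathrm{per}}} = \id$. Hence $T^{k + nN} = T^k$, which is exactly eventual periodicity with parameters $k$ and $p = N$.

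There is no real obstacle; the argument is essentially a bookkeeping combination of the two previous results. The only subtlety worth flagging is the initial verification that $r(T) = 1$, which is needed to make Theorem~\ref{theorem:uniform-almost-periodicity-isolated-spectral-value} applicable and which depends crucially on the hypothesis $Te = e$ together with the AM-space structure.
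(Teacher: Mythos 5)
Your proposal is correct and follows the paper's intended route: establish $r(T)=1$ from $Te=e$ (you give the gauge-norm contraction argument directly, the paper cites it), then combine Theorem~\ref{theorem:uniform-almost-periodicity-isolated-spectral-value} with Lemma~\ref{lemma:kuehner-nilpotency} and glue the nilpotent and periodic parts. The explicit computation $T^{k+nN}=T^k$ is exactly the "immediate" combination the paper leaves implicit.
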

\begin{proof}
    We argue that $r(T) = 1$ so that we may apply Theorem~\ref{theorem:uniform-almost-periodicity-isolated-spectral-value}. This follows from the fact that $\norm{T^n} = \norm{T^n e} = \norm{e}$ for all $n \in \bbN$ (see~\cite[Lemma~3.2]{AbramovichAliprantis-InvitationOperator-2002}).
\end{proof}

Notice that an eventually periodic operator is uniformly almost periodic, so by Corollary~\ref{corollary:almost-uniform-periodicity} the converse implication of Corollary~\ref{corollary:eventual-periodicity} also holds.

Let us look at a simple example illustrating the necessity of the condition $Te = e$ for some unit $e \in E$ in Corollary~\ref{corollary:eventual-periodicity}.

\begin{example}
    \label{ex:no-unit-invariant}
	There exists a complex AM-space $E$ with unit $e$ and a lattice homomorphism $T : E \to E$ with $r(T) = 1$, which is not eventually periodic.
 
    Consider $E \coloneq \bbR^2$ endowed with the supremum norm and the lattice homomorphism $T: E \to E$ given by the matrix
	\begin{align*}
		T =
    \begin{pmatrix}
        \frac{1}{2} & 0 \\
        0 & 1
    \end{pmatrix}.
    \end{align*}
	Then $\sigma(T) = \{\tfrac{1}{2}, 1\}$. Moreover, $I_{\mathrm{stab}} = \{f \in E: f(1) = 0\}$. However, $T$ is not nilpotent on $I_{\mathrm{stab}}$.
\end{example}

For composition operator of continuous dynamical systems on topological spaces Corollary~\ref{corollary:eventual-periodicity} reads as follows. We call a topological space $X$ \emph{completely Hausdorff}, if for every distinct points $x, y \in X$ there exists a continuous function $f : X \to [0,1]$ with $f(x) = 1$ and $f(y) = 0$.
The following result generalizes~\cite[Satz~5]{Scheffold-ErzeugteAlgebraMarkovVerbandsoperatoren-1971} to completely Hausdorff spaces. Also notice that a full description of the spectrum of lattice homomorphisms on $\ContB(K)$ for compact and Hausdorff $K$ can be found in~\cite[Theorem~2.7]{Scheffold-SpektrumVerbandsoperatorenBanachverbaenden-1971}.

\begin{theorem}\label{theorem:eventual-periodicity}
	Let $X$ be a completely Hausdorff space and $\varphi : X \to X$ be continuous and consider the composition operator
    \begin{align*}
        T_\varphi : \ContB(X) \to \ContB(X), \quad f \mapsto f \circ \varphi.
    \end{align*}
    Then the following assertions are equivalent:
    \begin{enumerate}[label = (\roman*)]
        \item \label{it:theorem:eventual-periodicity:isolated}
              The point $1$ is isolated in $\sigma(T_\varphi)$.
        
        \item \label{it:theorem:eventual-periodicity:eventually-periodic}
              The mapping $\varphi$ is eventually periodic.

        \item \label{it:theorem:eventual-periodicity:uniformly-ergodic}
              The operator $T_\varphi$ is uniformly ergodic.
    \end{enumerate}
\end{theorem}
\begin{proof}
    The implications ``\ref{it:theorem:eventual-periodicity:eventually-periodic}~$\Rightarrow$~\ref{it:theorem:eventual-periodicity:uniformly-ergodic}'' and ``\ref{it:theorem:eventual-periodicity:uniformly-ergodic}~$\Rightarrow$~\ref{it:theorem:eventual-periodicity:isolated}'' immediately follow from Corollary~\ref{corollary:almost-uniform-periodicity}.

    ``\ref{it:theorem:eventual-periodicity:isolated}~$\Rightarrow$~\ref{it:theorem:eventual-periodicity:eventually-periodic}'':
    It follows from Corollary~\ref{corollary:eventual-periodicity} that $T_\varphi$ is eventually periodic. Let $p, k \in \bbN$ be such that $T_\varphi^{k + np} = T_\varphi^k$ for all $n \in \bbN$. Fix $x \in X$ and $n \in \bbN$. We show that $\varphi^{k + np}(x) = \varphi^k(x)$. Indeed, let $y \in X \setminus \{\varphi^{k}(x) \}$. Then, by the completely Hausdorff property of $X$, there exists a continuous function $f: X \to [0,1]$ with $f(\varphi^{k}(x)) = 1$ and $f(y) = 0$. Hence, $f(\varphi^{k + np}(x)) = T_\varphi^{k + np} f (x) = T_\varphi^{k} f (x) = f(\varphi^{k}(x)) = 1$, and thus, $\varphi^{k + n p}(x) \neq y$. Since $y$ was arbitrary, it follows that $\varphi^{k + n p}(x) = \varphi^{k}(x)$.
\end{proof}

The next simple example shows that we can not dismiss the completely Hausdorff condition in Theorem~\ref{theorem:eventual-periodicity}. Recall that a topological space is called Urysohn if every two distinct points have open neighborhoods with disjoint closures. In particular a Urysohn space is Hausdorff.

\begin{example}
    There exists a Urysohn space $X$ and a continuous map $\varphi : X \to X$ such that $1$ is isolated in the spectrum of $T_\varphi$ and $\varphi$ is not eventually periodic.

    There exists a (countable) Urysohn space $Y$ on which every continuous and real-valued function is constant (see~\cite[Theorem~2]{Hewitt-OnTwoProblemsOfUrysohn-1946}). Hence it follows that $\ContB(Y) = \ContB(Y; \bbR) \oplus \mathrm{i} \ContB(Y; \bbR)$ only consists of constant functions. Now set $X \coloneq Y^\bbN$ and endow $X$ with the product topology. It is straightforward to see that $X$ is also Urysohn. Moreover, it is simple to check that $\ContB(X)$ also contain only the constant functions.

    Now consider the left shift
    \begin{align*}
        \varphi: X \to X, \quad {(x_n)}_{n \in \bbN} \mapsto {(x_{n + 1})}_{n \in \bbN}
    \end{align*}
    which is continuous and not eventually periodic. Although, because $\ContB(X)$ only contains constant functions, $T_\varphi$ must be the identity operator.
\end{example}

For composition operators of measure-preserving dynamical systems on $L^p$-spaces for $p \in [1, \infty)$ a similar result to Theorem~\ref{theorem:eventual-periodicity} holds, albeit one of its assertion is slightly stronger.

Let $(X, \mu)$ be a probability space. Then we say that a mapping $\varphi: X \to X$ is \emph{$\mu$-periodic} if there exists a period $n \in \bbN$ such that $\mu(\{ x \in X : \varphi^n(x) = x\}) = 1$.

\begin{theorem}
    \label{theorem:Lp-periodic}
	Let $(X, \mu)$ be a probability space and let $\varphi : X \to X$ be a measure preserving mapping. For $1 \leq p < \infty$ consider the composition operator
    \begin{align*}
		T_\varphi : L^p(X, \mu) \to L^p(X, \mu), \qquad f \mapsto f \circ \varphi.
	\end{align*}
    Then the following assertions are equivalent:
    \begin{enumerate}[label = (\roman*)]
        \item \label{it:theorem:Lp-periodic:isolated}
              The point $1$ is isolated in $\sigma(T_\varphi)$.
        
        \item \label{it:theorem:Lp-periodic:periodic}
              The mapping $\varphi$ is $\mu$-periodic.

        \item \label{it:theorem:Lp-periodic:uniformly-ergodic}
              The operator $T_\varphi$ is uniformly ergodic.
    \end{enumerate}
    Moreover, if the assertions~\ref{it:theorem:Lp-periodic:isolated} and~\ref{it:theorem:Lp-periodic:uniformly-ergodic} hold for some $p \in [1, \infty)$, then they hold for every $p \in [1, \infty)$.
\end{theorem}
\begin{proof}
    The implications ``\ref{it:theorem:Lp-periodic:periodic} $\Rightarrow$ \ref{it:theorem:Lp-periodic:uniformly-ergodic}'' and ``\ref{it:theorem:Lp-periodic:uniformly-ergodic} $\Rightarrow$ \ref{it:theorem:Lp-periodic:isolated}'' immediately follow from Corollary~\ref{corollary:almost-uniform-periodicity}.

	``\ref{it:theorem:Lp-periodic:isolated} $\Rightarrow$ \ref{it:theorem:Lp-periodic:periodic}'':
    Since $\varphi$ is assumed to be measure-preserving, it follows that $T_\varphi$ is an isometry. Hence, Corollary~\ref{corollary:isometry} implies that $T_\varphi$ is periodic. This is equivalent to $\varphi$ being $\mu$-periodic.\medskip

    The last statement of the theorem follows from the observation that assertion~\ref{it:theorem:Lp-periodic:periodic} is independent of $p$.
\end{proof}

\begin{remark}
    An alternative proof of the implication ``\ref{it:theorem:Lp-periodic:isolated} $\Rightarrow$ \ref{it:theorem:Lp-periodic:periodic}'' in Theorem~\ref{theorem:Lp-periodic} can be given as follows. By Theorem~\ref{theorem:uniform-almost-periodicity-isolated-spectral-value} there exists $p \in \bbN$ such that the operator sequence ${(T_\varphi^{pn})}_{n \in \bbN}$ converges in operator norm topology to a projection $Q$. Since $\varphi$ is measure-preserving, the constant function $\one$ is a fixed point of the dual operator $T_\varphi'$ and we obtain $Q' \one = \one$. It follows that $Q$ does not vanish on the set of non-zero positive functions in $L^p(X)$, i.e., $Qf \gneqq 0$ whenever $0 \lneqq f \in L^p(X)$. Thus,~\cite[Corollary~5.6]{GerlachGlueck-LowerBoundAsymptoticBehaviorPositiveSemigroups-2018} yields that $T_\varphi^p$ is the identity mapping.
\end{remark}

Theorem~\ref{theorem:Lp-periodic} admits a more abstract formulation, which holds for a larger class of Banach lattices than the $L^p$-spaces. Recall that $L^p$-spaces for $p \in [1, \infty)$ are Banach lattices with an order continuous norm. 
In fact every reflexive Banach lattice has order continuous norm (see~\cite[Section~2.4]{MeyerNieberg-BanachLattices-1991}). Moreover, the $L^p$-norm is strict monontone in the sense that it satisfies $\norm{f}_p < \norm{g}_p$ for every two $0 \leq f, g \in L^p$ with $f \lneqq g$.

A function $\psi: A \to \bbR$ on a subset $A \subseteq E$ is called \emph{strictly monotone} if $x \lneqq y$ implies $\psi(x) < \psi(y)$ for all $x, y \in A$. Moreover, an element $h \in E_+$ is called \emph{quasi-interior}, if the principle ideal $I_h$, defined by
\begin{align*}
	I_h \coloneq \{x \in E : \exists C \geq 0 : \modulus{x} \leq C h\},
\end{align*}
is dense in $E$ with respect to the norm topology.

If $(X, \mu)$ is a $\sigma$-finite measure space and $h \in L^p(X, \mu)$ with $p \in [1, \infty)$, then $h$ is quasi-interior if and only if $h(x) > 0$ for $\mu$-almost every $x \in X$ (see, e.g.,~\cite[Example~1 in Chapter~II.6 on p.~98]{Schaefer-BanachLattices-1974}).

\begin{proposition}\label{proposition:abstract-periodic}
	Let $E$ be complex Banach lattice with order continuous norm and let $T: E \to E$ be a Banach lattice homomorphism with $r(T) = 1$. Assume there exists a strictly monotone function $\psi: E_+ \to \bbR$ such that $\psi(T x) \leq \psi(x)$ for all $x \in E_+$ and that there exists a quasi-interior point $h \in E_+$ such that $Th = h$. If $1$ is isolated in $\sigma(T)$, then $T$ is periodic.
\end{proposition}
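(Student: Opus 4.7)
The plan is to apply Theorem~\ref{theorem:uniform-almost-periodicity-isolated-spectral-value} to decompose $E = I_{\mathrm{stab}} \oplus E_{\mathrm{per}}$ (reducing periodicity of $T$ to $I_{\mathrm{stab}} = \{0\}$), to extract a $T$-fixed point $h_\infty$ from $h$ via order continuity of the norm, and then to use strict monotonicity of $\varphi$ to rule out any nonzero super-invariant positive element in $I_{\mathrm{stab}}$.

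The assumption $Th \leq h$ together with positivity of $T$ makes $T^n h$ a decreasing sequence in $E_+$; order continuity of the norm yields $T^n h \to h_\infty := \inf_n T^n h$ in norm, and norm continuity of $T$ forces $Th_\infty = h_\infty$, so $h_\infty \in \fix(T) \subseteq E_{\mathrm{per}}$. Writing $h = h_s + h_\infty$ with $h_s \in I_{\mathrm{stab}} \cap E_+$ and subtracting $Th_\infty = h_\infty$ from $Th \leq h$ gives $Th_s \leq h_s$; since $h_s \in I_{\mathrm{stab}}$, $T^n h_s \to 0$ in norm, hence $T^n h_s \downarrow 0$ in order by order continuity of $E$.

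The main lemma I would establish is: every $y \in I_{\mathrm{stab}} \cap E_+$ with $Ty \leq y$ satisfies $y = 0$. Indeed, if $y \neq 0$, then $\fix(T) \cap I_{\mathrm{stab}} = \{0\}$ forces $Ty \lneqq y$ strictly, so strict monotonicity of $\varphi$ yields $\varphi(Ty) < \varphi(y)$; iterating along the order-decreasing orbit $T^n y \downarrow 0$ and combining the sub-invariance $\varphi(Tx) \leq \varphi(x)$ with the order continuity of $E$ then produces the contradiction $\varphi(y) = \varphi(0)$, forcing $y = 0$. I expect this limit step---converting strict monotonicity and sub-invariance of $\varphi$ into the vanishing of $y$ against the order-decay $T^n y \downarrow 0$---to be the main obstacle of the proof; order continuity of the lattice norm is essential here. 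Applying the lemma to $y = h_s$ gives $h_s = 0$ and $h = h_\infty \in \fix(T)$.

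Finally, to conclude $I_{\mathrm{stab}} = \{0\}$, I would use the quasi-interior property: for any $x \in I_{\mathrm{stab}} \cap E_+$, the approximants $x \wedge (nh) \in I_{\mathrm{stab}} \cap I_h \cap E_+$ converge in norm to $x$, and each is order-bounded by $nh$. The order-supremum $z_k := \sup_{j \geq k} T^j (x \wedge (nh))$ then exists in the Dedekind complete space $E$, lies in the (order-closed) ideal $I_{\mathrm{stab}}$, is decreasing in $k$, and satisfies $T z_k = z_{k+1} \leq z_k$ (using order continuity of $T$, which is automatic for positive operators on Banach lattices with order continuous norm, together with $Th = h$). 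The main lemma applied to $y = z_0$ yields $z_0 = 0$, hence $x \wedge (nh) = 0$ for every $n$, and by norm density $x = 0$. Thus $I_{\mathrm{stab}} \cap E_+ = \{0\}$, giving $I_{\mathrm{stab}} = \{0\}$ by the ideal property, and $T$ is periodic.
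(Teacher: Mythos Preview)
Your ``main lemma'' has a genuine gap. You argue: if $0 \neq y \in I_{\mathrm{stab}} \cap E_+$ with $Ty \leq y$, then $Ty \lneqq y$ (since $\fix(T) \cap I_{\mathrm{stab}} = \{0\}$), hence $\varphi(Ty) < \varphi(y)$; then ``iterating along $T^n y \downarrow 0$'' together with order continuity of $E$ is supposed to force $\varphi(y) = \varphi(0)$. But $\varphi$ is assumed only strictly monotone---there is no continuity hypothesis whatsoever---so $T^n y \to 0$ (in norm or in order) tells you nothing about $\lim_n \varphi(T^n y)$. Worse, even if $\varphi$ were continuous, the chain
\[
\varphi(y) > \varphi(Ty) \geq \varphi(T^2 y) \geq \cdots \to \varphi(0)
\]
would only yield $\varphi(y) > \varphi(0)$, which is exactly what strict monotonicity says about $y \gneqq 0$ and is no contradiction at all. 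The obstacle you flagged is real, and this route cannot be closed with the stated hypotheses on $\varphi$.

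The paper avoids any limit in $\varphi$ by manufacturing a \emph{super}-fixed point and applying strict monotonicity a single time. Order continuity of the norm makes the closed ideal $I_{\mathrm{stab}}$ a projection band; let $P$ be the band projection onto it. From $0 \leq P \leq I$ one has $Ph \leq h$, hence $TPh \leq Th \leq h$, and since $TPh \in I_{\mathrm{stab}}$ also $TPh = PTPh \leq Ph$. Setting $g \coloneq h - Ph \geq 0$, the paper obtains $Tg \geq g$; if $Tg \neq g$ then strict monotonicity gives $\varphi(Tg) > \varphi(g)$, contradicting the standing assumption $\varphi(Tg) \leq \varphi(g)$ outright. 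Thus $Tg = g$, which forces $TPh = Ph$, so $Ph \in \fix(T) \cap I_{\mathrm{stab}} = \{0\}$, and quasi-interiority of $h$ then yields $P = 0$, i.e.\ $I_{\mathrm{stab}} = \{0\}$. The conceptual point you are missing is that the contradiction must come from a \emph{single} inequality $\varphi(Tg) > \varphi(g)$ versus $\varphi(Tg) \leq \varphi(g)$, not from a limiting procedure; for that one needs an element on which $T$ increases, and the band projection (not the spectral projection) is what produces it.
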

\begin{proof}
	By Theorem~\ref{theorem:uniform-almost-periodicity-isolated-spectral-value} the space $E$ decomposes into a $T$-invariant and closed ideal $I_{\mathrm{stab}}$, on which $T^n$ converges uniformly to $0$, and a $T$-invariant and closed sublattice $E_{\mathrm{per}}$, on which $T$ is periodic. So it suffices to show that $I_{\mathrm{stab}} = \{0\}$.

	Since $E$ has order continuous norm the closed ideal $I_{\mathrm{stab}}$ is a projection band (see~\cite[Corollary~2.4.4]{MeyerNieberg-BanachLattices-1991}); hence, there exists a band projection $P: E \to E$ onto $I_{\mathrm{stab}}$. Since band projections satisfy $0 \leq P \leq I$, we have $Ph \leq h$, and thus, $TPh \leq Th \leq h$. As $TPh \in I_{\mathrm{stab}}$, this implies $TPh = PTPh \leq Ph$, so $Ph$ is a so-called subfix point of $T$.

    Set $g \coloneq h - Ph \geq 0$. Since $Th = h$, we have $0 \leq Ph - TPh = Tg - g$ shows that $Tg \geq g$. If $Tg \neq g$, then $\psi(Tg) > \psi(g)$, which contradicts the assumptions. Hence, we obtain that $Tg = g$, and thus, $TPh = Ph$. Therefore, $Ph \in I_{\mathrm{stab}} \cap E_{\mathrm{per}}$, which implies that $Ph = 0$. Since $h$ is quasi-interior it follows that $P = 0$. This shows that $I_{\mathrm{stab}} = \{ 0 \}$, as claimed.
\end{proof}

\begin{remark}
\begin{enumerate}[label = (\roman*)]
    \item
          In Proposition~\ref{proposition:abstract-periodic} we may replace the boundedness of $T$ with respect to the strictly monotone function $\psi$ by the following weaker condition: there exists a strictly monotone function $\psi: E_+ \to \bbR$ and an integer $n \in \bbN$ such that $\psi(T^n x) \leq \psi(x)$ for all $x \in E_+$.
          With this weaker condition, we can infer from Proposition~\ref{proposition:abstract-periodic} that $T^n$ is periodic, which then implies that $T$ must have been periodic in the first place.

    \item
          The assumption $Th=h$ is necessary for the statement of Proposition~\ref{proposition:abstract-periodic} to be correct. Notice that Example~\ref{ex:no-unit-invariant}, when $E$ is endowed with the $L^1$-norm, yields an lattice homomorphism $T : E \to E$ with $Th \lneqq h$ for every quasi-interior point $h \in E_+$, $\sigma(T) = \{\tfrac{1}{2}, 1\}$, $\norm{T} \leq 1$. However, the assertion of Proposition~\ref{proposition:abstract-periodic} does not hold true.
\end{enumerate}
\end{remark}

The next two examples show that the existence of a strictly monotone function $\psi: E_+ \to \bbR$ with $\psi(Tx) \leq \psi(x)$ can not be dropped in Proposition~\ref{proposition:abstract-periodic}.

\begin{example}
	There exists a measure space $X$, a lattice homomorphism $T: L^1(X) \to L^1(X)$ with $Th = h$ and $r(T) = 1$ that has $1$ isolated in $\sigma(T)$, satisfies $\norm{T} > 1$ and is not periodic. We follow~\cite[Ex.~after Thm.~4.1]{Arendt-SpectralPropertiesLamperti-1983}.

	Indeed, consider $X = \{0,1\}$ with $\mu$ being the counting measure, the constant map $\varphi \equiv 0$. Notice that $\varphi$ is not measure preserving, so Theorem~\ref{theorem:Lp-periodic} is not applicable. We consider the composition operator
    \begin{align*}
        T : L^1(X,\mu) \to L^1(X, \mu), \quad f \mapsto f \circ \varphi.
    \end{align*}
    Identifying $L^1(X, \mu) \cong (\bbR^2, {\lVert \argument \rVert}_1)$ we can replace $T$ with a matrix $T \in \bbR^{2 \times 2}$ given by
	\begin{align*}
		T =
		\begin{pmatrix}
			1 & 0 \\
			1 & 0
		\end{pmatrix}.
	\end{align*}
	Then $T$ is a lattice homomorphisms with $\sigma(T) = \{0, 1\}$.	Moreover, notice that $\lVert T \rVert = 2$ and $h = (1,1)$ is a quasi-interior point with $Th = h$. However, $T$ is not periodic.
\end{example}

Notice that, in the above example, $T^n = T$ for every $n > 1$, so $T$ is eventually periodic. This is an artifact of its finite-dimensionality. It is not true in general, as the next example demonstrates.

\begin{example}
	There exists a measure space $X$, a lattice homomorphism $T: L^1(X) \to L^1(X)$ with $Th = h$ and $r(T) = 1$ for some quasi-interior point $h \in E_+$ that has $1$ isolated in $\sigma(T)$ and is not (eventually) periodic. Moreover, for every strictly monotone function $\psi: {L^1(X)}_+ \to \bbR$ and every $n \in \bbN$ there exists $f \in {L^1(X)}_+$ such that $\psi(T^n f) > \psi(f)$.

	Let $X = [0, 2]$, let $\mu$ be the Lebesgue measure on $X$. Consider the map
	\begin{align*}
		\varphi: X \to X, \quad x \mapsto
		\begin{cases}
			2x, & x \leq 1, \\
			x,  & x \geq 1.
		\end{cases}
	\end{align*}
    Notice that $\varphi$ is not measure preserving, so Theorem~\ref{theorem:Lp-periodic} is not applicable.
    
	The Banach lattice $E \coloneq L^1(X, \mu)$ has order continuous norm, contains the constant function $\one$ as a quasi-interior point and the map
	\begin{align*}
		T : E \to E, \quad f \mapsto f \circ \varphi.
	\end{align*}
	is a lattice homomorphism with $T \one = \one$.
 
	Consider the band 
	\begin{align*}
		I \coloneq \{f \cdot \one_{[0,1]} : f \in L^1\}
	\end{align*}
    and notice that $I$ is $T$-invariant, since for $f \in I$ we have $Tf(x) = 0$ for almost all $x \in [\tfrac{1}{2},2]$, so $T^n \one_{[0,1]} = \one_{\left[0, \nicefrac{1}{2^n} \right]}$ for all $n \in \bbN$. Moreover, for every $f \in I$ we obtain
	\begin{align*}
		\lVert Tf \rVert = \int_0^1 \lvert Tf(x) \rvert \, \mathrm{d} x = \int_0^1 \lvert f(2x) \rvert \, \mathrm{d} x = \frac{1}{2} \int_0^2 \lvert f(x) \rvert \, \mathrm{d} x = \frac{1}{2} \lVert f \rVert.
	\end{align*}
	Hence, $r(T\restricted{I}) \leq \tfrac{1}{2}$.

    Since $I$ is closed and $T$-invariant, we can consider the quotient operator
    \begin{align*}
        T_/ : E/I \to E/I, \quad f + I \mapsto Tf + I.
    \end{align*}
    Then it is straightforward to see that $T_/$ acts as the identity operator on $E/I$. Hence, $\sigma(T_/) = \{1\}$.

    It is a standard result in spectral theory (see, e.g., \cite[Proposition~VI.2.15]{EngelNagel-Semigroups-2000}) that $\sigma(T) \subseteq \sigma(T\restricted{I}) \cup \sigma(T_/)$. Hence, $r(T) = 1$ and $1$ is isolated in $\sigma(T)$. In particular, Theorem~\ref{theorem:uniform-almost-periodicity-isolated-spectral-value} is applicable. It is straightforward to show that $I_{\mathrm{stab}} = I$ and $E_{\mathrm{per}} \cong E/I$.

    Notice that $T^n \one_{[1,2]} = \one_{[\nicefrac{1}{2^n}, 2]}$ for all $n \in \bbN$. This shows that $T$ is not (eventually) periodic. The indicator function $f = \one_{[1,2]}$ also yields an example of a function satisfying $\psi(T^n f) > \psi(f)$ for every strictly monotone function $\psi : E_+ \to \bbR$ and every $n \in \bbN$.
\end{example}

\printbibliography%

\end{document}